\newtheorem{theorem}{Theorem}[section]
\theoremstyle{plain}
\newtheorem{acknowledgement}{Acknowledgement}
\newtheorem{corollary}{Corollary}[section]
\newtheorem{definition}{Definition}[section]
\newtheorem{lemma}{Lemma}[section]
\newtheorem{proposition}{Proposition}[section]
\numberwithin{equation}{section}
\begin{document}
\title[Local controllability]{A necessary and sufficient condition for local
controllability around closed orbits.}
\author{Marek Grochowski}
\keywords{control systems, controllability, closed orbits}
\email{m.grochowski@uksw.edu.pl}

\begin{abstract}
In this paper we give a necessary and sufficient condition for local
controllability around closed orbits for general smooth control systems. We
also prove that any such system on a compact manifold has a closed orbit.
\end{abstract}

\maketitle

\section{Introduction.}

\subsection{Motivation.}

The aim of this note is to formulate and prove a necessary and sufficient
condition for local controllability of general control systems around a
closed orbit. Let $M$ be a smooth (or real analytic) manifold, and let $%
\mathcal{U}$ be a subset of $\mathbb{R}^{k}$. Consider a smooth (or real
analytic) control system $(\Sigma )$ $\dot{x}=f(x,u)$, $u\in \mathcal{U}$,
where controls $u:[0,T]\longrightarrow \mathcal{U}$ are bounded measurable,
and the final time $T=T(u)\geq 0$ is not fixed and depends on a control $u$.
If $u:[0,T(u)]\longrightarrow \mathcal{U}$ is a control then a solution of
the ordinary differential equation $\dot{x}(t)=f(x(t),u(t))$ is called 
\textit{a trajectory (or an admissible curve, or an orbit) of }$(\Sigma )$%
\textit{\ generated by }$u$. The system $(\Sigma )$ is said to be \textit{%
controllable} if for every $x,y\in M$ there exists a control $u$ defined on $%
[0,T(u)]$ such that if $\gamma $ is the trajectory of $(\Sigma )$ generated
by $u$ and satisfying $\gamma (0)=x$, then $\gamma (T(u))=y$. The system $%
(\Sigma )$ is \textit{controllable at a point} $x$ if there exists a
neighbourhood $U$ of $x$ such that the restriction of $(\Sigma )$ to $U$ is
a controllable system. A neighbourhood\ $U$ as above is called \textit{a
controllable neighbourhood}. There are a lot of results devoted to
controllability question for control systems in connection with the
existence of closed or 'almost closed' orbits, for instance: \cite{Bonnard1}%
, \cite{Bonnard2}, \cite{Hermes1}, \cite{Hermes2}, \cite{Lobry1}, \cite%
{Lobry2}. Before we cite a few of them, we will fix some notation. If $%
Z_{1},...,Z_{l}$ are vector fields on a manifold $M$ then denote by $%
Lie\{Z_{1},...,Z_{l}\}$ the Lie algebra generated by $Z_{1},...,Z_{l}$. For
an $x\in M$, let $Lie_{x}\{Z_{1},...,Z_{l}\}$ stand for the subspace in $%
T_{x}M$ spanned by all vectors $v$ of the form $v=W(x)$ where $W\in
Lie\{Z_{1},...,Z_{l}\}$. Recall that a point $x$ is Poisson stable for a
vector field $X$ if for every neighbourhood $V$ of $x$, and for every $T>0$
there exist $t_{1},t_{2}>T$ such that $g_{X}^{t_{1}}(x)\in V$, $%
g_{X}^{-t_{2}}(x)\in V$. Also, a vector field $X$ defined on a Riemannian
manifold is conservative if $g_{X}^{t}$ preserves the natural measure on $M$%
. In both cases $g_{X}^{t}$ stands for the flow of $X$.

Let us start from citing two results on global controllability. \smallskip

\textit{Theorem (Bonnard \cite{Bonnard1}): Consider an affine control system 
}$\dot{x}=X+\tsum_{i=1}^{k}u_{i}Y_{i}$\textit{\ on an analytic manifold }$M$%
\textit{, where }$\tsum_{i=1}^{k}\left\vert u_{i}\right\vert \leq 1$\textit{%
\ and the fields }$X,Y_{i}$, $i=1,...,k$, \textit{are supposed to be
analytic. Assume that the set of points which are Poisson stable for }$X$%
\textit{\ is dense in }$M$\textit{. Then the system in question is
controllable if and only if }$\dim Lie_{x}\{X,Y_{1},...,Y_{k}\}=\dim M$%
\textit{\ for every }$x\in M$\textit{. }\smallskip

In particular, controllability holds if all orbits of $X$ are
closed.\smallskip

\textit{Theorem (Lobry \cite{Lobry2}): Consider an affine control system }$%
\dot{x}=X+\tsum_{i=1}^{k}u_{i}Y_{i}$\textit{\ on a compact analytic manifold 
}$M$\textit{, where }$\tsum_{i=1}^{k}\left\vert u_{i}\right\vert \leq 1$ and 
\textit{the fields }$X,Y_{i}$, $i=1,...,k$, \textit{are supposed to be
analytic and conservative. Then the system in question is controllable if
and only if }$\dim Lie_{x}\{X,Y_{1},...,Y_{k}\}=\dim M$\textit{\ for every }$%
x\in M$\textit{. }\smallskip

Two last theorems are not exact quotations but can be deduced respectively
from \textit{\cite{Bonnard1} and \cite{Lobry2}}.

There are also results concerning local controllability. The result which is
closest to our interests is as follows.\smallskip

\textit{Theorem (Nam, Arapostathis \cite{Arapo}): Consider a smooth control
system }$\dot{x}=X+\tsum_{i=1}^{k}u_{i}Y_{i}$\textit{, }$u\in \mathcal{U}$,%
\textit{\ where }$\mathcal{U}$\textit{\ is a neighbourhood of }$0$\textit{,
and let }$\Gamma $\textit{\ be a closed orbit for }$X$\textit{. Define }$%
\mathcal{G}_{i}=\{ad^{i}X.Y_{j}:j=1,...,k\}$\textit{, and suppose that there
exists a point }$x\in \Gamma $\textit{\ such that }%
\begin{equation}
rank\{X,\mathcal{G}_{0},\mathcal{G}_{1},...\}(x)=\dim M\text{.}
\label{Arwar}
\end{equation}%
\textit{Then }$\Gamma $\textit{\ has a controllable neighbourhood.\smallskip 
}

There are also other results, cf. for instance \cite{Hermes1}, but they use
stronger assumptions than \cite{Arapo}. As it will be seen at the end of
this paper, assumptions in \cite{Arapo} can be weakened.

\subsection{Statement of main results.}

The goal of this paper (which generalizes some ideas from the sub-Lorentzian
geometry that were developed by the author in \cite{gr2}) is to prove two
theorems: one concerns the existence of closed orbits, the other states
necessary and sufficient conditions for local controllability around closed
orbits. In order to state them, we first formulate our assumptions. Again,
let 
\begin{equation}
\dot{x}=f(x,u)=f_{u}(x)\text{, \ \ }u\in \mathcal{U}\text{,}  \tag{$\Sigma $}
\end{equation}%
be a control system, where $M$ is a smooth manifold, $\mathcal{U}$ is (an
arbitrary) subset of $\mathbb{R}^{k}$, $f$ is a continuous mapping $M\times 
\mathcal{U}\longrightarrow TM$, and $f_{u}$ is a smooth vector field on $M$
for every $u\in \mathcal{U}$. Our main assumption is 
\begin{equation}
\dim Lie_{x}\{f_{u}:\;u\in \mathcal{U\}}=n=\dim M  \label{Ass2}
\end{equation}%
for every $x\in M$. Similarly as above, our controls are bounded measurable
and the final time is not fixed. It follows from known results for ODE's
with measurable right hand side (see e.g. \cite{Bress}) that under such
assumptions, to every control $u:[0,T]\longrightarrow \mathcal{U}$ there
corresponds an admissible trajectory of $(\Sigma )$ (defined maybe on a
smaller interval).

The first result that we will prove is the following

\begin{theorem}
Consider the control system $(\Sigma )$ for which (\ref{Ass2}) holds, and
suppose that $M$ is compact. Then the system $(\Sigma )$ has closed orbits.
\end{theorem}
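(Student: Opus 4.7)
The plan is to combine a compactness/minimality argument with the Lie algebra rank condition (\ref{Ass2}) to first produce almost-closed trajectories and then perturb them into exactly closed ones. Let $\mathcal{A}_{+}(x)$ denote the set of points reachable from $x$ by an admissible trajectory of $(\Sigma )$ in strictly positive time. First I would apply Zorn's lemma to the family of nonempty closed subsets $K\subseteq M$ that are forward invariant in the sense that $\mathcal{A}_{+}(x)\subseteq K$ for every $x\in K$. This family is nonempty (it contains $M$), and any chain ordered by reverse inclusion has the intersection as a lower bound; the intersection is nonempty by compactness of $M$ and plainly remains forward invariant. Zorn therefore produces a minimal element $K$.

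Next I would show that $\overline{\mathcal{A}_{+}(x)}=K$ for every $x\in K$. The closure $\overline{\mathcal{A}_{+}(x)}$ is closed, contained in $K$, and nonempty because (\ref{Ass2}) combined with Krener's theorem forces $\mathcal{A}_{+}(x)$ to have nonempty interior. It is also forward invariant: if $y_{n}\to y$ with $y_{n}\in \mathcal{A}_{+}(x)$ and $z\in \mathcal{A}_{+}(y)$, then propagating the control that steers $y$ to $z$ starting instead from $y_{n}$ and using continuous dependence on initial data produces points of $\mathcal{A}_{+}(x)$ converging to $z$. Minimality of $K$ then forces equality; in particular every $x\in K$ satisfies $x\in \overline{\mathcal{A}_{+}(x)}$, so $x$ is approximable in positive time by trajectories returning to it.

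The decisive third step is to promote this approximate recurrence to an exact closed orbit. By (\ref{Ass2}) one can select constant controls $u_{1},\dots ,u_{m}\in \mathcal{U}$ and positive switching times $t_{1}^{0},\dots ,t_{m}^{0}$ so that the piecewise-constant endpoint map $(t_{1},\dots ,t_{m})\mapsto g_{f_{u_{m}}}^{t_{m}}\circ \cdots \circ g_{f_{u_{1}}}^{t_{1}}(x)$ is a submersion at $(t_{1}^{0},\dots ,t_{m}^{0})$; this is a standard iterated-bracket construction relying on the fact that $Lie_{x}\{f_{u}:u\in \mathcal{U}\}$ spans $T_{x}M$. The open mapping theorem applied to this submersion, combined with the sequence of near-returns from the previous step, then lets one perturb the switching times of a sufficiently close almost-closing trajectory so that its endpoint is exactly $x$, yielding the desired closed orbit.

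The hard part is this closing step. A naive application of the open mapping theorem could fail because the radius of the neighborhood covered by the endpoint map might shrink as one approaches $x$ through closer and closer near-returns. To handle this, I would organize the construction so that the regular full-rank control is chosen first, localized at $x$ via (\ref{Ass2}), and only then concatenated with a near-return trajectory furnished by the minimality step; this way the open-mapping constant is determined a priori by the local behavior at $x$ and is independent of how close the chosen near-return happens to be.
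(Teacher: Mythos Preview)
Your first two steps (Zorn's lemma producing a minimal closed forward-invariant set $K$, and the conclusion that every $x\in K$ lies in $\overline{\mathcal{A}_{+}(x)}$) are sound and constitute a reasonable route to recurrence. The difficulty is in the closing argument. You form the full-rank endpoint map $\Phi(t)=g_{f_{u_{m}}}^{t_{m}}\circ\cdots\circ g_{f_{u_{1}}}^{t_{1}}(x)$, which is a submersion onto an open set $V$ around $y_{0}=\Phi(t^{0})$, and then append a near-return control $v$ steering $y_{0}$ to some $x'$ close to $x$. The composite $\psi_{v}\circ\Phi$ (with $\psi_{v}$ the flow map of $v$) is indeed still a submersion, but the neighbourhood it covers around $x'$ is $\psi_{v}(V)$, whose radius is governed by $D\psi_{v}$, i.e.\ by the linearized flow along the near-return piece. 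This is \emph{not} determined a priori by local data at $x$: as one forces $x'\to x$, the near-return control $v$ may require long times or large excursions, and $\psi_{v}$ can distort $V$ by an uncontrolled factor. So the assertion in your last paragraph --- that the open-mapping constant is fixed once the full-rank control is chosen and is independent of the near-return --- is precisely the unjustified step; without it the trajectory need not close exactly, and the argument does not conclude.

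The paper bypasses the closing problem entirely, via a much shorter argument. Using Krener's theorem together with a brief duality lemma ($y\in int\,\mathcal{A}^{+}(x)\Leftrightarrow x\in int\,\mathcal{A}^{-}(y)$), it shows that $\{int\,\mathcal{A}^{+}(x)\}_{x\in M}$ is an open cover of $M$. Compactness yields a finite subcover indexed by $x_{1},\dots,x_{m}$; then $x_{1}\in int\,\mathcal{A}^{+}(x_{i_{1}})$, $x_{i_{1}}\in int\,\mathcal{A}^{+}(x_{i_{2}})$, and so on, and the pigeonhole principle forces a repetition among the finitely many indices. This produces a finite cycle of \emph{exact} reachability relations, which concatenate directly into a closed orbit --- no approximate recurrence, no perturbation, and no quantitative inverse-function estimate is needed. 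If you want to salvage your minimal-set approach, the same duality lemma does the job: pick $y\in int\,\mathcal{A}^{+}(x)\subset K$, so that $x\in int\,\mathcal{A}^{-}(y)$; then density of $\mathcal{A}_{+}(y)$ in $K$ forces $\mathcal{A}_{+}(y)$ to meet the open set $int\,\mathcal{A}^{-}(y)$, giving $y\to z\to y$ for some $z$.
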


Let $x\in M$ and take its neighbourhood $U$. Denote by $\mathcal{A}^{+}(x,U)$
\textit{the reachable set from a point }$x$\textit{\ in }$U$ for the system $%
(\Sigma )$, i.e. the set of endpoints of all trajectories of $(\Sigma )$
that start from $x$, are generated by measurable controls (final time is not
fixed), and are contained in $U$. The sets $\mathcal{A}^{+}(x,M)$ will be
denoted simply by $\mathcal{A}^{+}(x)$. Let us remark that controllability
of $(\Sigma )$ means that $\mathcal{A}^{+}(x)=M$ for every $x\in M$.

Suppose now that $\Gamma $ is a closed orbit for $(\Sigma )$. If a point $x$
belongs to $\Gamma $ then $\Gamma _{x}$ will stand for the set $\Gamma
\backslash \{x\}$.

\begin{definition}
We say that the closed orbit $\Gamma $ is \textit{regular}, if there exists
a point $x\in \Gamma $ and a neighbourhood $U$ of $x$ such that 
\begin{equation}
\Gamma _{x}\cap \mathcal{A}^{+}(x,U)\subset int\text{ }\mathcal{A}^{+}(x,U)%
\text{.}  \label{Reg}
\end{equation}
\end{definition}

Our second result can be stated as follows.

\begin{theorem}
Suppose that $\Gamma $ is a closed orbit for the system $(\Sigma )$ for
which (\ref{Ass2}) holds. Then the necessary and sufficient condition for $%
(\Sigma )$ to be locally controllable at every point of $\Gamma $ is that $%
\Gamma $ be a regular closed orbit. More precisely, a closed orbit $\Gamma $
of $(\Sigma )$ is regular if and only if $\Gamma $ possesses a controllable
neighbourhood.
\end{theorem}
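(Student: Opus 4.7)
The necessity is immediate: if $V$ is a controllable neighbourhood of $\Gamma $, then $\mathcal{A}^{+}(x,V)=V$ for every $x\in \Gamma $, so $\Gamma _{x}\cap V\subset V=\mathrm{int}\,\mathcal{A}^{+}(x,V)$ and regularity (\ref{Reg}) holds with $U=V$. The bulk of the proof is the sufficiency direction, for which I propose a three-stage plan.

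\textbf{Stage 1 (from one-point regularity to forward normal reachability).} Let $x_{0}\in \Gamma $ be the point furnished by the definition of regularity, with associated neighbourhood $U_{0}$. Following $\Gamma $ forward from $x_{0}$ for a short time produces a point $y_{0}\in \Gamma _{x_{0}}\cap \mathcal{A}^{+}(x_{0},U_{0})$; by (\ref{Reg}) one has $y_{0}\in \mathrm{int}\,\mathcal{A}^{+}(x_{0},U_{0})$, so we can fix an open ball $B_{y_{0}}\subset \mathcal{A}^{+}(x_{0},U_{0})$. The closed orbit provides a control $u_{\ast }$ steering $y_{0}$ back to $x_{0}$ along the remainder of $\Gamma $; its endpoint map $\Phi _{\ast }$ is a smooth local diffeomorphism from a neighbourhood of $y_{0}$ onto a neighbourhood of $x_{0}$. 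Fix a tubular neighbourhood $V$ of $\Gamma $ and shrink $B_{y_{0}}$ so that the trajectories of $u_{\ast }$ starting in $B_{y_{0}}$ remain in $V$. Then $\Phi _{\ast }(B_{y_{0}})\subset \mathcal{A}^{+}(x_{0},V)$ by concatenation, and $\Phi _{\ast }(B_{y_{0}})$ is an open neighbourhood of $x_{0}$, giving $x_{0}\in \mathrm{int}\,\mathcal{A}^{+}(x_{0},V)$. A standard further concatenation argument then shows that $\mathcal{A}^{+}(x_{0},V)$ is itself open, hence is an open neighbourhood of $\Gamma $.

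\textbf{Stage 2 (backward reachability via (\ref{Ass2})).} The aim is to prove also $x_{0}\in \mathrm{int}\,\mathcal{A}^{-}(x_{0},V)$. This is the main obstacle, because (\ref{Reg}) is intrinsically one-sided. The intended mechanism is that the closed orbit acts as a strong recurrence at $x_{0}$: the full-period control along $\Gamma $ sends a neighbourhood of $x_{0}$ diffeomorphically back to a neighbourhood of $x_{0}$, and Krener's accessibility theorem (which follows from (\ref{Ass2})) supplies enough local flexibility to perturb these returns so that, starting from an arbitrary $z$ near $x_{0}$, some admissible trajectory contained in $V$ terminates exactly at $x_{0}$. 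This is, in our setting, the analogue of the way Poisson stability and the Lie algebra rank condition are combined in the Bonnard and Lobry theorems cited in the introduction.

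\textbf{Stage 3 (assembling the controllable neighbourhood).} The same endpoint-map argument as in Stage 1, applied to controls steering $x_{0}$ to each $y\in \Gamma $ and back along $\Gamma $, transports Stages 1 and 2 so that $y\in \mathrm{int}(\mathcal{A}^{+}(x_{0},V)\cap \mathcal{A}^{-}(x_{0},V))$ for every $y\in \Gamma $. Let $W:=\mathcal{A}^{+}(x_{0},V)\cap \mathcal{A}^{-}(x_{0},V)$; both factors are open (the first by Stage 1, the second by the analogous concatenation argument based on Stage 2), so $W$ is an open neighbourhood of $\Gamma $. Given $z_{1},z_{2}\in W$, concatenate trajectories $z_{1}\rightarrow x_{0}$ and $x_{0}\rightarrow z_{2}$ inside $V$: a direct check shows that every point on the resulting trajectory still lies in $\mathcal{A}^{+}(x_{0},V)\cap \mathcal{A}^{-}(x_{0},V)=W$, so the trajectory is contained in $W$. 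Hence $W$ is a controllable neighbourhood of $\Gamma $.
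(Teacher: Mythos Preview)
Your Stage~1 and Stage~3 are essentially sound and mirror the paper's argument (the paper's Lemma~\ref{rez2} is your Stage~1 in slightly different language, and the final assembly is the same). The genuine gap is Stage~2, which you yourself flag as ``the main obstacle'' and then do not actually resolve: the appeal to ``strong recurrence'' plus Krener is a heuristic, not a proof. Saying that one can ``perturb these returns so that\ldots some admissible trajectory contained in $V$ terminates exactly at $x_{0}$'' is precisely the statement to be proved, and Krener's theorem by itself only gives that $x_{0}$ lies in the \emph{closure} of $\mathrm{int}\,\mathcal{A}^{-}(x_{0})$, not in the interior.

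What you are missing is the clean duality lemma the paper proves first (Lemma~\ref{rez1}): $y\in \mathrm{int}\,\mathcal{A}^{+}(x)$ if and only if $x\in \mathrm{int}\,\mathcal{A}^{-}(y)$. Its proof is short: from $y\in \mathrm{int}\,\mathcal{A}^{+}(x)$ and Krener's $y\in \overline{\mathrm{int}\,\mathcal{A}^{-}(y)}$ one gets a point $z\in \mathrm{int}\,\mathcal{A}^{+}(x)\cap \mathrm{int}\,\mathcal{A}^{-}(y)$; the concatenated $(\Sigma)$-trajectory $x\to z\to y$, time-reversed, is a $(\Sigma^{-})$-trajectory that enters $\mathrm{int}\,\mathcal{A}^{-}(y)$ at $z$ and therefore (by the flow-diffeomorphism propagation you already use in Stage~1) stays there, giving $x\in \mathrm{int}\,\mathcal{A}^{-}(y)$. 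Applying this with $x=y=x_{0}$ turns your Stage~1 conclusion $x_{0}\in \mathrm{int}\,\mathcal{A}^{+}(x_{0})$ directly into your Stage~2 goal $x_{0}\in \mathrm{int}\,\mathcal{A}^{-}(x_{0})$, with no further work. A secondary simplification in the paper is to use the \emph{global} reachable sets $\mathcal{A}^{\pm}(\Gamma)$ rather than $\mathcal{A}^{\pm}(x_{0},V)$ for a fixed tubular $V$; this avoids having to check at every step that perturbed trajectories remain in $V$.
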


Note that in theorem 1.2 $M$ is not supposed to be compact. Let us also note
that the curve $\Gamma $ need not be smooth. Theorem 1.2 generalizes
slightly results from \cite{Arapo} as it will be clarified at the end of
this paper.

\section{Proofs of Theorems.}

Along with the system $(\Sigma )$ we will consider the system 
\begin{equation}
\dot{x}=-f(x,u)\text{, \ \ }u\in \mathcal{U}\text{.}  \tag{$\Sigma ^{-}$}
\end{equation}

Let us note a simple observation which will be useful later.

\begin{lemma}
$\gamma (t)$ is a trajectory of the system $(\Sigma )$ generated by a
control $u(t)$ if and only if $\tilde{\gamma}(t)=\gamma (-t)$ is a
trajectory of the system $(\Sigma ^{-})$ generated by a control $\tilde{u}%
(t)=u(-t)$.
\end{lemma}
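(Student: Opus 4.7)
The plan is to prove this lemma by a direct chain-rule computation, exploiting the fact that the claim is symmetric under the involution $(t, \gamma, u) \mapsto (-t, \tilde{\gamma}, \tilde{u})$.

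For the forward direction, I would assume $\gamma$ satisfies $\dot{\gamma}(t) = f(\gamma(t), u(t))$ for almost every $t$ in its interval of definition, say $[0, T]$. Setting $\tilde{\gamma}(t) = \gamma(-t)$ on $[-T, 0]$ and $\tilde{u}(t) = u(-t)$, the chain rule immediately yields $\dot{\tilde{\gamma}}(t) = -\dot{\gamma}(-t) = -f(\gamma(-t), u(-t)) = -f(\tilde{\gamma}(t), \tilde{u}(t))$, which is precisely the differential equation defining a trajectory of $(\Sigma^{-})$ generated by $\tilde{u}$. Since the reflection $t \mapsto -t$ is an affine transformation preserving Lebesgue null sets, both the almost-everywhere equality of the ODE and the measurability of the control carry over without change.

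For the converse, I would simply apply the forward implication to the system $(\Sigma^{-})$, whose associated reversed system is $(\Sigma)$ itself. Thus if $\tilde{\gamma}(t) = \gamma(-t)$ is a trajectory of $(\Sigma^{-})$ with control $\tilde{u}(t) = u(-t)$, then $t \mapsto \tilde{\gamma}(-t) = \gamma(t)$ is a trajectory of $(\Sigma^{-})^{-} = (\Sigma)$ with control $\tilde{u}(-t) = u(t)$, giving the equivalence.

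There is no real obstacle here: the only small bookkeeping points are the choice of convention for the domain of $\tilde{\gamma}$ (one may work on $[-T, 0]$ or translate to $[0, T]$ via $t \mapsto \gamma(T - t)$ without changing the content of the lemma) and the trivial observation that measurability is stable under time-reversal. The lemma is really just a statement that the two ODE systems are related by a $C^{\infty}$ change of parameter, so the argument is essentially a one-line chain-rule calculation.
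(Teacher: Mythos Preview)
Your proof is correct; the chain-rule computation and the symmetry argument for the converse are exactly the right way to handle this. The paper itself offers no proof at all, simply labelling the lemma ``a simple observation,'' so your write-up is more detailed than the original and there is nothing further to compare.
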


Denote by $\mathcal{A}^{-}(x,U)$ the corresponding reachable set from $x$
for the system $(\Sigma ^{-})$. At the same time let $\mathcal{A}_{0}^{+}(x)$%
, $\mathcal{A}_{0}^{-}(x)$ be the reachable sets for $(\Sigma )$ and $%
(\Sigma ^{-})$, respectively, generated by piecewise constant controls.
Recall now \cite{Krener} Krener's theorem which states that under the
assumption (\ref{Ass2}) the inclusion $\mathcal{A}_{0}^{+}(x)\subset 
\overline{int\text{ }\mathcal{A}^{+}(x)}$ (and the same for $\mathcal{A}%
_{0}^{-}(x)$) holds true. Therefore $int$ $\mathcal{A}^{+}(x)$ and $int$ $%
\mathcal{A}^{-}(x)$ are non-empty for every $x\in M$. Notice also that 
\begin{equation*}
x\in \overline{int\text{ }\mathcal{A}^{+}(x)}\cap \overline{int\text{ }%
\mathcal{A}^{-}(x)}
\end{equation*}%
for any $x\in M$. Indeed, by Krener's theorem 
\begin{equation*}
x\in \mathcal{A}_{0}^{+}(x)\subset \overline{int\text{ }\mathcal{A}%
_{0}^{+}(x)}\subset \overline{int\text{ }\mathcal{A}^{+}(x)}\text{,}
\end{equation*}%
and the same for $\mathcal{A}^{-}(x)$. Now it is easy to show that

\begin{lemma}
\label{rez1}$y\in int$ $\mathcal{A}^{+}(x)$ if and only if $x\in int$ $%
\mathcal{A}^{-}(y)$.
\end{lemma}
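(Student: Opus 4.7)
The plan is to prove the forward implication $y \in int\,\mathcal{A}^{+}(x) \Longrightarrow x \in int\,\mathcal{A}^{-}(y)$; the reverse then drops out by applying the same argument to the system $(\Sigma^{-})$ in place of $(\Sigma)$ with the roles of $x$ and $y$ interchanged (Lemma 2.1 treats the two systems symmetrically, and the reachable set $\mathcal{A}^{-}$ for $(\Sigma)$ is exactly $\mathcal{A}^{+}$ for $(\Sigma^{-})$). The key idea is to locate an intermediate point $p$ which already lies in $int\,\mathcal{A}^{-}(y)$ and is close enough to $y$ to be reachable from $x$, and then to propagate the openness property from $p$ to $x$ by following the flow of a single $(\Sigma^{-})$-trajectory.

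Concretely, I would fix an open neighbourhood $V \ni y$ with $V \subset \mathcal{A}^{+}(x)$. Using the inclusion $y \in \overline{int\,\mathcal{A}^{-}(y)}$ already established in the paragraph preceding the lemma via Krener's theorem, the open set $V$ must meet $int\,\mathcal{A}^{-}(y)$, so I can choose $p \in V \cap int\,\mathcal{A}^{-}(y)$. Since $p \in V \subset \mathcal{A}^{+}(x)$, Lemma 2.1 yields a $(\Sigma^{-})$-trajectory $\beta:[0,S]\to M$ from $p$ to $x$ generated by some measurable control $v$. Let $\Psi^{S}_{v}$ be the time-$S$ flow of the time-dependent field $-f(\cdot,v(\cdot))$, which is a local homeomorphism at $p$; shrinking so that its domain $V''$ lies in $V\cap int\,\mathcal{A}^{-}(y)$, the image $\Psi^{S}_{v}(V'')$ is an open neighbourhood of $x$. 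For each $p'\in V''$ one has $p'\in \mathcal{A}^{-}(y)$ and $\Psi^{S}_{v}(p')\in \mathcal{A}^{-}(p')$, since $\Psi^{S}_{v}(p')$ is the endpoint of the $(\Sigma^{-})$-trajectory starting from $p'$ and driven by $v$; concatenating these two trajectories yields $\Psi^{S}_{v}(p')\in \mathcal{A}^{-}(y)$. Hence $\Psi^{S}_{v}(V'')\subset \mathcal{A}^{-}(y)$, which gives exactly $x\in int\,\mathcal{A}^{-}(y)$.

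The only non-set-theoretic input is the assertion that the time-$S$ flow of a vector field which is merely measurable in $t$ and smooth in $x$ is a local homeomorphism on a neighbourhood of the reference trajectory; this is the standard Caratheodory-type ODE fact invoked through \cite{Bress}. Apart from that, the argument consists entirely of trajectory concatenation together with the monotonicity of the interior operator with respect to inclusion, so I anticipate no real obstacle, consistently with the author's remark ``it is easy to show'' immediately before the lemma.
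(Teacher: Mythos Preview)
Your argument is correct and follows essentially the same route as the paper: both proofs use Krener's theorem to find an intermediate point in $\mathcal{A}^{+}(x)\cap int\,\mathcal{A}^{-}(y)$ and then push along the $(\Sigma^{-})$-trajectory from that point to $x$. The only difference is presentational: the paper phrases the final step as ``once a $(\Sigma^{-})$-trajectory enters $int\,\mathcal{A}^{-}(y)$ it stays there'', while you unpack that same fact explicitly via the local homeomorphism given by the Carath\'eodory flow.
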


\begin{proof}
Suppose that $y\in int$ $\mathcal{A}^{+}(x)$. Since $y\in \overline{int\text{
}\mathcal{A}^{-}(y)}$ it follows that $\allowbreak int$ $\mathcal{A}%
^{+}(x)\cap \allowbreak int$ $\mathcal{A}^{-}(y)\neq \varnothing $. Taking a 
$z\in int$ $\mathcal{A}^{+}(x)\cap int$ $\mathcal{A}^{-}(y)$ we see that
there exist admissible curves for the system ($\Sigma )$: $\sigma _{1}$
joining $x$ to $z$, and (cf. lemma 2.1) $\sigma _{2}$ joining $z$ to $y$.
Reversing time in $\sigma _{1}\cup \sigma _{2}$ we obtain an admissible
curve $\tilde{\sigma}$ for the system $(\Sigma ^{-})$ that joins $y$ to $x$,
and which belongs to the interior $int$ $\mathcal{A}^{-}(y)$ starting from a
certain time $t_{0}>0$ (for instance $t_{0}$ corresponds to a point $z$).
But this means that $\tilde{\sigma}$ stays in $int$ $\mathcal{A}^{-}(y)$ for
all $t>t_{0}$, therefore $x\in int$ $\mathcal{A}^{-}(y)$.
\end{proof}

We come to the proof of theorem 1.1 now. First we need to establish the
following proposition.

\begin{proposition}
The family $\left\{ int\text{ }\mathcal{A}^{+}(x)\right\} _{x\in M}$ forms
an open covering of $M$.
\end{proposition}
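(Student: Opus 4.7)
The plan is to leverage \lemref{rez1} (the duality between the interiors of the forward and backward reachable sets) together with the Krener-based observation, already recorded just before the proposition, that $int\ \mathcal{A}^{-}(y)$ is non-empty for every $y\in M$. The openness half of the statement is free, since each member of the family $\{int\ \mathcal{A}^{+}(x)\}_{x\in M}$ is an interior, so the whole content of the proposition is the covering property: for every $y\in M$ I must produce some $x\in M$ with $y\in int\ \mathcal{A}^{+}(x)$.

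To do this, I would fix $y\in M$ arbitrarily. By Krener's theorem applied to the reversed system $(\Sigma ^{-})$ at $y$ (as in the discussion preceding the proposition), the set $int\ \mathcal{A}^{-}(y)$ is non-empty. Choose any point $x$ in $int\ \mathcal{A}^{-}(y)$. Now apply the \emph{reverse} direction of \lemref{rez1}: the equivalence
\[
y\in int\ \mathcal{A}^{+}(x)\iff x\in int\ \mathcal{A}^{-}(y)
\]
together with the right-hand side, which holds by choice of $x$, immediately yields $y\in int\ \mathcal{A}^{+}(x)$. Hence $y$ lies in one of the open sets of the family, and since $y$ was arbitrary, this family covers $M$.

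There is essentially no technical obstacle here, because the substantive work has already been absorbed into Krener's theorem and into \lemref{rez1}. The only point requiring a touch of care is to apply \lemref{rez1} in the correct direction: one must start with a chosen $x$ in $int\ \mathcal{A}^{-}(y)$ (rather than the other way around), which is legitimate precisely because $int\ \mathcal{A}^{-}(y)\neq \varnothing $. So the write-up will be short: name the set whose non-emptiness one needs, cite Krener, pick $x$ in it, and invoke the lemma.
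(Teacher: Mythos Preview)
Your proposal is correct and follows essentially the same approach as the paper: fix a point, use Krener's theorem to get a point in the interior of its backward reachable set, and then invoke \lemref{rez1} to flip this into membership in the interior of a forward reachable set. The only cosmetic difference is that the paper phrases the choice of the auxiliary point as the endpoint $\gamma(t)$ of a $(\Sigma^{-})$-trajectory lying in $int\ \mathcal{A}^{-}(x)$, whereas you simply pick any $x\in int\ \mathcal{A}^{-}(y)$; the content is the same.
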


\begin{proof}
Fix a point $x\in M$. Send through it a trajectory $\gamma $, $\gamma (0)=x$%
, of $(\Sigma ^{-})$ such that $\gamma (t)\in int$ $\mathcal{A}^{-}(x)$ for
a $t>0$; by our assumptions such a curve exists. Now, the above lemmas imply
that $x\in int$ $\mathcal{A}^{+}(\gamma (t))$, proving the assertion.
\end{proof}

Suppose that $M$ is compact. By proposition 2.1 there are points $%
x_{1},...,x_{m}\in M$ such that $M=\tbigcup_{i=1}^{m}int$ $\mathcal{A}%
^{+}(x_{i})$. Now $x_{1}\in int$ $\mathcal{A}^{+}(x_{i_{1}})$, for an$\
i_{1}\in \{1,...,m\}$, $x_{i_{1}}\in int$ $\mathcal{A}^{+}(x_{i_{2}})$ for $%
i_{2}\in \{1,...,m\}$ etc. In this way we are led to an infinite sequence $%
\{x_{i_{k}}\}_{k=1,2,...}$ with $x_{i_{k}}\in int$ $\mathcal{A}%
^{+}(x_{i_{k+1}})$ and $i_{k}\in \{1,...,m\}$. Therefore we can find
positive integers $l$ and $p$ such that $x_{i_{l}}\in int$ $\mathcal{A}%
^{+}(x_{i_{l+1}})$, $x_{i_{l+1}}\in int$ $\mathcal{A}^{+}(x_{i_{l+2}})$..., $%
x_{i_{l+p}}\in int$ $\mathcal{A}^{+}(x_{i_{l}})$. This ends the proof of
theorem 1.1.\smallskip

Now we move on to the proof of theorem 1.2. First of all let us list
immediate properties of closed orbits. If $\Gamma $ is a closed orbit for $%
(\Sigma )$ then $\mathcal{A}^{+}(x_{1})\mathcal{=A}^{+}\mathcal{(}x_{2})$
for every $x_{1},x_{2}\in \Gamma $. Moreover, $\mathcal{A}^{+}(x)=\mathcal{A}%
^{+}(\Gamma )$ for $x\in \Gamma $, where by $\mathcal{A}^{+}(\Gamma )$ we
mean $\bigcup_{x\in \Gamma }\mathcal{A}^{+}(x)$. Since $\Gamma $, under
suitable parameterization, is a closed orbit also for $(\Sigma ^{-})$, we
have $\mathcal{A}^{-}(x_{1})=\mathcal{A}^{-}(x_{2})=\mathcal{A}^{-}(\Gamma )$
for any $x_{1},x_{2}\in \Gamma $. Let us also recall a standard fact from
control theory asserting that the reachable set $\mathcal{A}^{\pm }(x)$ is
open if and only if $x\in int$ $\mathcal{A}^{\pm }(x)$.

Next we prove

\begin{lemma}
\label{rez2}If $\Gamma $ is a regular closed orbit for $(\Sigma )$ then the
set $\mathcal{A}^{+}(\Gamma )$ is open.
\end{lemma}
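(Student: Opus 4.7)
The goal is to show that $\mathcal{A}^+(\Gamma)$ is open. Since for any $x\in\Gamma$ we have $\mathcal{A}^+(\Gamma)=\mathcal{A}^+(x)$, and since by the standard fact recalled just above, $\mathcal{A}^+(x)$ is open if and only if $x\in int\,\mathcal{A}^+(x)$, it suffices to exhibit a single point $x\in\Gamma$ with $x\in int\,\mathcal{A}^+(x)$.

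The plan is first to use regularity to produce a point of $\Gamma$ close to $x$ that already sits in $int\,\mathcal{A}^+(x)$, and then to transfer this interior-membership back to $x$ itself by two applications of Lemma~\ref{rez1}. Take $x\in\Gamma$ and a neighborhood $U$ as furnished by the definition of regular closed orbit, and parameterize $\Gamma$ as an admissible trajectory $\gamma:[0,T]\to M$ with $\gamma(0)=\gamma(T)=x$. For all sufficiently small $T'>0$, the segment $\gamma([0,T'])$ lies in $U$ and $\gamma(T')\neq x$, so $\gamma(T')\in\Gamma_x\cap\mathcal{A}^+(x,U)$; the regularity inclusion then yields $\gamma(T')\in int\,\mathcal{A}^+(x,U)\subset int\,\mathcal{A}^+(x)$. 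Applying Lemma~\ref{rez1} to the pair $(x,\gamma(T'))$ converts this into $x\in int\,\mathcal{A}^-(\gamma(T'))$, and since $\gamma(T')$ lies on $\Gamma$ the closed-orbit invariance $\mathcal{A}^-(\gamma(T'))=\mathcal{A}^-(x)$ (listed just above) improves it to $x\in int\,\mathcal{A}^-(x)$. A second use of Lemma~\ref{rez1} with $y=x$ supplies the equivalence $x\in int\,\mathcal{A}^+(x)\Longleftrightarrow x\in int\,\mathcal{A}^-(x)$, and the proof is complete.

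The step most likely to require some care is the verification that $\gamma(T')\neq x$ for arbitrarily small $T'>0$; this is what lets $\gamma(T')$ sit in $\Gamma_x$ and so activates the regularity inclusion. For a genuine nonstationary closed orbit this should be essentially automatic, but a clean statement may require a short appeal to whichever notion of ``closed orbit'' the author adopts. A minor additional point is the monotonicity $int\,\mathcal{A}^+(x,U)\subset int\,\mathcal{A}^+(x)$, which follows at once from $\mathcal{A}^+(x,U)\subset\mathcal{A}^+(x)$.
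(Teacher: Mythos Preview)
Your argument is correct, but it takes a different route from the paper's. The paper never invokes Lemma~\ref{rez1} or the reverse system $(\Sigma^{-})$ here: having picked a point $y\in\Gamma_{x}\cap\mathcal{A}^{+}(x,U)\subset int\,\mathcal{A}^{+}(x)$, it shows directly that $y\in int\,\mathcal{A}^{+}(y)$ by taking an open $V$ with $y\in V\subset\mathcal{A}^{+}(x)$ and, for any $z\in V$, building a $(\Sigma)$-trajectory from $y$ to $z$ by first riding $\Gamma$ from $y$ around to $x$ and then going from $x$ to $z$. This is a bare-hands concatenation argument, using only that $\Gamma$ is closed and that $V\subset\mathcal{A}^{+}(x)$. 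Your version instead passes through the dual reachable sets: from $\gamma(T')\in int\,\mathcal{A}^{+}(x)$ you use Lemma~\ref{rez1} to get $x\in int\,\mathcal{A}^{-}(\gamma(T'))=int\,\mathcal{A}^{-}(x)$, and then the $y=x$ instance of the same lemma to come back to $x\in int\,\mathcal{A}^{+}(x)$. Both are short; the paper's is more self-contained, while yours is arguably slicker once Lemma~\ref{rez1} is on the table and makes the symmetry between $(\Sigma)$ and $(\Sigma^{-})$ do the work. Note also that the nonemptiness issue you flag (that $\Gamma_{x}\cap\mathcal{A}^{+}(x,U)\neq\varnothing$, i.e.\ $\gamma(T')\neq x$ for small $T'$) is equally present, though unstated, in the paper's proof.
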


\begin{proof}
Take an $x\in \Gamma $ and $U$ such that (\ref{Reg}) is satisfied, i.e. $%
\Gamma _{x}\cap \mathcal{A}^{+}(x,U)\subset int$ $\mathcal{A}^{+}(x,U)$.
Clearly $int$ $\mathcal{A}^{+}(x,U)\subset int$ $\mathcal{A}^{+}(x)$. Take a
point $y\in \Gamma _{x}\cap \mathcal{A}^{+}(x,U)$ and an open set $V$ such
that $y\in V\subset \mathcal{A}^{+}(x)$. For any $z\in V$ one can construct
a trajectory of $(\Sigma )$ joining $y$ to $z$: we connect $y$ to $x$ by a
suitable segment of $\Gamma $, and then $x$ to $z$ ($z\in \mathcal{A}^{+}(x)$%
). In this way we proved that $V\subset \mathcal{A}^{+}(y)$, i.e. $y\in int$ 
$\mathcal{A}^{+}(y)$. This proves that $\mathcal{A}^{+}(y)=\mathcal{A}%
^{+}(\Gamma )$ is open.
\end{proof}

The last stage in proving theorem 1.2 is the following observation.

\begin{lemma}
Let $\Gamma $ be a closed orbit for $(\Sigma )$. $\Gamma $ is regular for $%
(\Sigma )$ if and only if it is regular for $(\Sigma ^{-})$ (under suitable
parameterization).
\end{lemma}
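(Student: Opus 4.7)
By symmetry it suffices to prove one direction, say that regularity of $\Gamma$ for $(\Sigma)$ implies regularity of $\Gamma$ for $(\Sigma^-)$ in the reversed parameterization supplied by Lemma 2.1; the opposite implication is verbatim the same with the roles of the two systems swapped. The guiding observation is that the local condition in the definition of regularity is in fact equivalent to the global statement ``$\mathcal{A}^+(\Gamma)$ is open''. One direction of this equivalence is exactly Lemma \ref{rez2}; for the reverse direction I would take the witnessing neighborhood $V$ in the definition of regularity to be the entire reachable set.

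In more detail, I would proceed in three short steps. First, apply Lemma \ref{rez2} to conclude that $\mathcal{A}^+(\Gamma)$ is open, so that $x\in int\,\mathcal{A}^+(x)$ for every $x\in\Gamma$. Second, specialize Lemma \ref{rez1} to $y=x$ to obtain $x\in int\,\mathcal{A}^-(x)$ for every $x\in\Gamma$, so that $\mathcal{A}^-(x)=\mathcal{A}^-(\Gamma)$ is also open. Third, fix any $x\in\Gamma$ and put $V:=\mathcal{A}^-(\Gamma)$; this is an open neighborhood of $x$. Every $(\Sigma^-)$-trajectory issued from $x$ remains in $\mathcal{A}^-(x)$ simply by definition of the reachable set, so $\mathcal{A}^-(x,V)=\mathcal{A}^-(x)=V$, which is open, hence $int\,\mathcal{A}^-(x,V)=V$. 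Since $\Gamma$ is a closed orbit also for $(\Sigma^-)$, one has $\Gamma_x\subset\mathcal{A}^-(\Gamma)=V$, and therefore
\[
\Gamma_x\cap\mathcal{A}^-(x,V)=\Gamma_x\subset V=int\,\mathcal{A}^-(x,V),
\]
which is regularity of $\Gamma$ for $(\Sigma^-)$ at $x$.

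There is no genuine technical obstacle beyond Lemmas \ref{rez1} and \ref{rez2} themselves. The only point that requires a moment of thought is the choice $V=\mathcal{A}^-(\Gamma)$: picking the \emph{whole} reachable set as the witnessing neighborhood is what collapses the local reachable set $\mathcal{A}^-(x,V)$ onto the global one $\mathcal{A}^-(x)$, and it is precisely this collapse that turns the openness supplied by the earlier lemmas into the nominally local condition (\ref{Reg}) for the reversed system.
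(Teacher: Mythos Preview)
Your argument is correct. The only subtlety one might flag is the identity $\mathcal{A}^{-}(x,V)=\mathcal{A}^{-}(x)$ when $V=\mathcal{A}^{-}(x)$, but you justified it correctly: every intermediate point of a $(\Sigma^{-})$-trajectory issued from $x$ is itself an endpoint of a shorter such trajectory, hence lies in $\mathcal{A}^{-}(x)$.

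Your route differs from the paper's in an instructive way. The paper stays local: it fixes a short subarc $[x_{1},x_{2}]\subset\Gamma$, shows via Lemmas \ref{rez1} and \ref{rez2} that this arc lies in $int\,\mathcal{A}^{-}(x_{2})$, and then picks a small $W$ around $x_{2}$ so that $\Gamma_{x_{2}}\cap\mathcal{A}^{-}(x_{2},W)$ is forced into that arc. You instead go global, observing that regularity of $\Gamma$ is \emph{equivalent} to openness of $\mathcal{A}^{+}(\Gamma)$ (one direction is Lemma \ref{rez2}; for the other you take the whole reachable set as the witnessing neighbourhood), and then use Lemma \ref{rez1} with $y=x$ together with the standard fact ``$\mathcal{A}^{\pm}(x)$ is open iff $x\in int\,\mathcal{A}^{\pm}(x)$'' to pass symmetrically to $(\Sigma^{-})$. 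Your formulation makes the underlying equivalence explicit and yields Corollary 2.1 as an intermediate step rather than a consequence; the paper's version, on the other hand, never needs to name or invoke the global reachable set and keeps the witnessing neighbourhood genuinely small, which some readers may find closer to the spirit of Definition 1.1.
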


\begin{proof}
Because of symmetry, it is enough to prove one implication. Suppose that $%
\Gamma $ is regular for $(\Sigma )$ and choose $x_{1}$ and $U$ such that $%
\Gamma _{x_{1}}\cap \mathcal{A}^{+}(x_{1},U)\subset int$ $\mathcal{A}%
^{+}(x_{1},U)\subset int$ $\mathcal{A}^{+}(x_{1})$. Take a point $x_{2}\in
\Gamma _{x_{1}}\cap \mathcal{A}^{+}(x_{1},U)$ and denote by $[x_{1},x_{2}]$
the segment of $\Gamma $ bounded by points $x_{1}$ and $x_{2}$. By lemma \ref%
{rez2}, for every $z\in \lbrack x_{1},x_{2}]$, $x_{2}\in int$ $\mathcal{A}%
^{+}(z)$ which, by lemma \ref{rez1}, means that $z\in int$ $\mathcal{A}%
^{-}(x_{2})$. Thus $[x_{1},x_{2}]\subset int$ $\mathcal{A}^{-}(x_{2})$, and
consequently $\Gamma _{x_{2}}\cap \mathcal{A}^{-}(x_{2},W)\subset int$ $%
\mathcal{A}^{-}(x_{2},W)$ for suitably chosen neighbourhood $W$ of $x_{2}$,
proving that $\Gamma $ is regular for $(\Sigma ^{-})$.
\end{proof}

\begin{corollary}
If $\Gamma $ is a regular closed orbit for $(\Sigma )$ then the set $%
\mathcal{A}^{-}(\Gamma )$ is open.
\end{corollary}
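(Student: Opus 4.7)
The plan is to deduce the corollary directly by combining the two preceding lemmas together with the time-reversal symmetry encoded in Lemma 2.1, with essentially no new work required. I would first observe that the system $(\Sigma^{-})$ satisfies exactly the same standing hypotheses as $(\Sigma)$: the vector fields $-f_{u}$ generate the same Lie algebra as the $f_{u}$, so assumption (\ref{Ass2}) holds for $(\Sigma^{-})$, and all of the machinery developed so far (Krener's theorem, Lemma \ref{rez1}, Proposition 2.1, and in particular Lemma \ref{rez2}) applies to $(\Sigma^{-})$ verbatim, with the roles of $\mathcal{A}^{+}$ and $\mathcal{A}^{-}$ interchanged.

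Given this, the argument is immediate. By the previous lemma, regularity of $\Gamma$ for $(\Sigma)$ implies regularity of $\Gamma$ for $(\Sigma^{-})$, under the reversed parameterization provided by Lemma 2.1. Applying Lemma \ref{rez2} to $(\Sigma^{-})$ then shows that the forward reachable set for $(\Sigma^{-})$ issued from $\Gamma$ is open; but by the definition of $\mathcal{A}^{-}$ this set is precisely $\mathcal{A}^{-}(\Gamma)$, so the corollary follows.

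The only point that needs a moment of care, and which I expect to be the mildest of obstacles rather than a real difficulty, is verifying that the proof of Lemma \ref{rez2} is genuinely symmetric in the two systems. This is transparent: that proof uses only the defining inclusion (\ref{Reg}) for a regular closed orbit and the elementary fact that two admissible trajectories of the given system can be concatenated (one along $\Gamma$ and one in the interior of the relevant reachable set). Both ingredients are manifestly invariant under the replacement of $(\Sigma)$ by $(\Sigma^{-})$, so the argument carries over without change.
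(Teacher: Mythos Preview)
Your argument is correct and is exactly the intended one: the paper states this as an immediate corollary of the preceding lemma (regularity passes to $(\Sigma^{-})$) together with Lemma~\ref{rez2} applied to $(\Sigma^{-})$, which is precisely what you do. Your extra remark that $(\Sigma^{-})$ satisfies the same standing hypotheses, so that the earlier lemmas apply verbatim with $\mathcal{A}^{+}$ and $\mathcal{A}^{-}$ interchanged, makes explicit what the paper leaves implicit.
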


In order to finish the proof of theorem 1.2 it is enough to notice that if $%
\Gamma $ is a regular orbit for $(\Sigma )$ then $U=\mathcal{A}^{+}(\Gamma
)\cap \mathcal{A}^{-}(\Gamma )$ is a controllable neighbourhood. Indeed,
take arbitrary $x,y\in U$. Since $x\in \mathcal{A}^{-}(\Gamma )$ there
exists a trajectory of $(\Sigma )$ joining $x$ to a point of $\Gamma $.
Similarly, since $y\in \mathcal{A}^{+}(\Gamma )$ there exists a trajectory
of $(\Sigma )$ joining a point of $\Gamma $ to $y$. Finally, it is clear
that any two points belonging to $\Gamma $ can be joined by a trajectory of $%
(\Sigma )$. Evidently, any admissible trajectory joining $x$ to $y$ obtained
in this way does not leave $U$ by the very definition of $U$.

\section{One example.}

Before we state our example let us recall a concept of geometric optimality
and so-called singular extremals for the system $(\Sigma )$. So fix a
trajectory $\gamma :[0,T]\longrightarrow U$ of $(\Sigma )$, $U$ being an
open subset of $M$, which is generated by a control $\tilde{u}%
:[0,T]\longrightarrow \mathcal{U}$. We say that $\gamma $ (or $\tilde{u}$)
is \textit{geometrically optimal in }$U$ if $\gamma (T)\in \partial _{U}%
\mathcal{A}^{+}(\gamma (0),U)$; $\partial _{U}$ denotes here the boundary
operator with respect to $U$. On the other hand, $\gamma
:[0,T]\longrightarrow M$ is called an \textit{extremal}, if there exists an
absolutely continuous $p:$ $[0,T]\longrightarrow T^{\ast }M$ (called \textit{%
an extremal lift}) such that $p(t)\in T_{\gamma (t)}^{\ast }M\backslash
\{0\} $ for every $t$, and such that if we set $\mathcal{H}%
_{u}(x,p)=\left\langle p,f_{u}(x)\right\rangle $, then\smallskip

\begin{enumerate}
\item[(i)] $(\dot{\gamma}(t),\dot{p}(t))=\overrightarrow{\mathcal{H}_{\tilde{%
u}(t)}}(\gamma (t),p(t))$ a.e. on $[0,T]$ ($\overrightarrow{\mathcal{H}_{u}}$
is the Hamiltonian vector field on $T^{\ast }M$ corresponding to the
function $(x,p)\longrightarrow \mathcal{H}_{u}(x,p)$),\smallskip

\item[(ii)] $\mathcal{H}_{\tilde{u}(t)}(\gamma (t),p(t))=0$ on $[0,T]$,
and\smallskip

\item[(iii)] $\mathcal{H}_{\tilde{u}(t)}(\gamma (t),p(t))=\max_{u\in 
\mathcal{U}}\mathcal{H}_{u}(\gamma (t),p(t))$ a.e. on $[0,T]$.\smallskip
\end{enumerate}

It is proved \cite{Agr} that a necessary condition for $\gamma $ to be
geometrically optimal is that $\gamma $ be an extremal. Now, an extremal $%
\gamma (t)$ generated by a control $\tilde{u}$ with values in $int$ $%
\mathcal{U}$ is called \textit{a singular extremal} if there exists an
extremal lift $p(t)$ such that additionally\smallskip

\begin{enumerate}
\item[(iv)] $\frac{\partial \mathcal{H}_{u}(\gamma (t),p(t))}{\partial u}%
|_{u=\tilde{u}(t)}=0$ for every $t$.\smallskip
\end{enumerate}

It is a standard fact that if $\gamma $ is a geometrically optimal
trajectory of $(\Sigma )$ generated by a steering $u:[0,T]\longrightarrow
int $ $\mathcal{U}$ with values in $int$ $\mathcal{U}$, then $\gamma $ is a
singular trajectory of $(\Sigma )$.

Consider now a control affine system 
\begin{equation}
\dot{x}=X+uY\text{, \ \ }\left\vert u\right\vert \leq 1\text{,}
\label{AffEx}
\end{equation}%
defined on a manifold $M$. Fix a point $x_{0}$ and a time interval $[0,T]$.
Let $\gamma $ be the trajectory of $X$ initiating at a point $x_{0}$; in
other words $\gamma $ is a trajectory of our control system generated by the
control $u^{0}(t)\equiv 0$. Next, consider the so-called endpoint map $\Phi
^{T,x_{0}}$, i.e. the mapping which to each control $u:[0,T]\longrightarrow
\lbrack -1,1]$ assigns the point $\Phi ^{T,x_{0}}(u)=\gamma _{u}(T)$, where $%
\gamma _{u}$ is the trajectory of (\ref{AffEx}) that starts from $x_{0}$ and
is generated by $u$. It can be proved (see e.g. \cite{Bonnard2}) that 
\begin{equation*}
im\text{ }d_{u^{0}}\Phi ^{T,x_{0}}=Span\{Y(\gamma (T)),\left(
ad^{k}X.Y\right) (\gamma (T)):\text{ }k=1,2,...\}\text{,}
\end{equation*}%
where $adX.Y=\left[ X,Y\right] $, and $ad^{k+1}X.Y=\left[ X,ad^{k}X.Y\right] 
$, $k=1,2,...$ It is known (see again e.g. \cite{Bonnard2}) that $\gamma $
is not a singular trajectory for (\ref{AffEx}) if and only if 
\begin{equation}
\dim Span\{Y(\gamma (T)),\left( ad^{k}X.Y\right) (\gamma
(T)):k=1,2,...\}=\dim M\text{.}  \label{ConSing}
\end{equation}%
Now let us take a closer look at the result from \cite{Arapo} cited in the
introduction, applied to the system (\ref{AffEx}). Suppose that $\Gamma $ is
a closed orbit of $X$ and fix an $x\in \Gamma $. If (\ref{Arwar}) is
satisfied at $x$ then (\ref{ConSing}) does not have to be satisfied, as it
is explained in \cite{Arapo}. On the other hand assume that (\ref{ConSing})
is satisfied at $x$. Then of course (\ref{Arwar}) is also satisfied and, by
the above remark, $\Gamma $ is not a singular trajectory. Consequently, it
is not geometrically optimal from $x$ and, what follows, it is a regular
closed orbit for (\ref{AffEx}). Thus the satisfaction of (\ref{ConSing})
implies that $\Gamma $ is a regular closed orbit.

Now, we are going to present a simple construction of a closed trajectory $%
\Gamma $ which does not satisfy neither (\ref{ConSing}) nor (\ref{Arwar})
but anyway is a regular closed orbits.

To this end consider $W=\left\{ (x_{1},x_{2},x_{3}):\;x_{2}^{2}+x_{3}^{2}<1%
\text{, }0\leq x_{1}\leq 2\pi \right\} \subset \mathbb{R}^{3}$. Let us
introduce the following equivalence relation on $W$: $(x_{1},x_{2},x_{3})%
\thicksim (x_{1}^{\prime },x_{2}^{\prime },x_{3}^{\prime })$ if and only if $%
x_{2}=x_{2}^{\prime }$, $x_{3}=x_{3}^{\prime }$, $x_{1}=0$, $x_{1}^{\prime
}=2\pi $ or $x_{1}=2\pi $, $x_{1}^{\prime }=0$. Consider the factorization $%
p:W\longrightarrow M=W/\thicksim $. The space $M$ is a $3$-dimensional
manifold which in an obvious way is embedded in $\mathbb{R}^{3}$. Let $%
\tilde{X}=\frac{\partial }{\partial x_{1}}+x_{2}^{k}\frac{\partial }{%
\partial x_{3}}$, $\tilde{Y}=\frac{\partial }{\partial x_{2}}$, $k\geq 3$,
be vector fields on $\mathbb{R}^{3}$. After factorization they are
transformed to vector fields 
\begin{equation}
X=p_{\ast }\tilde{X}\text{, \ \ }Y=p_{\ast }\tilde{Y}  \label{PolaEx}
\end{equation}%
on $M$. Now denote by $(\Sigma )$ the control system (\ref{AffEx}) on $M$
where $X$ and $Y$ are define by (\ref{PolaEx}). It is easily seen that the
image under $p$ of the $x_{1}$-axis, denoted by $\Gamma $, is a closed and
singular trajectory for $(\Sigma )$. Indeed, its extremal lift is given by $%
\lambda (t)=(t\func{mod}2\pi ,0,0,0,0,1)$.

Define a rank $2$ distribution $H$ on $M$ by letting $H=Span\{X,Y\}$. If $x$
is a point in $M$ and $l$ is a positive integer, then we will write $%
H_{x}^{l}$ for the span of all vectors of the form 
\begin{equation*}
\lbrack X_{1},[X_{2},...,[X_{i-1},X_{i}]...]](x)\text{,}
\end{equation*}%
where $X_{1},...,X_{i}$ are smooth local sections of $H$ defined near $x$, $%
i\leq l$. Now it is not difficult to see that if $S=\left\{ x_{2}=0\right\} $%
, then $H$ is a contact distribution on $M\backslash S$, i.e. $%
H_{x}^{2}=T_{x}M$ whenever $x\in M\backslash S$. It can also be seen that $H$
has the following bracket properties on $S$: $H_{x}^{l}\subset H_{x}$, $%
1\leq l\leq k$, and $H_{x}^{k+1}=T_{x}M$ whenever $x\in S$. All this permits
us to conclude that, as it is explained in \cite{gr}, $(\Sigma )$ is an
affine control system induced by the generalized Martinet sub-Lorentzian
structure of Hamiltonian type of order $k$. Suppose that $k$ is odd. It
follows \cite{gr} that for every $x_{0}\in \Gamma $ there exists a
neighbourhood $U$ of $x_{0}$ and coordinates $\tilde{x}_{1},\tilde{x}_{2},%
\tilde{x}_{3}$ on $U$, $\tilde{x}_{1}(x_{0})=\tilde{x}_{2}(x_{0})=\tilde{x}%
_{3}(x_{0})=0$, such that $S\cap U=\left\{ \tilde{x}_{2}=0\right\} $, $%
\Gamma \cap U=\left\{ \tilde{x}_{2}=\tilde{x}_{3}=0\right\} $, and $\mathcal{%
A}^{+}(x_{0},U)=A_{1}\cup A_{2}$, where\smallskip $\newline
A_{1}=\left\{ x\in U:\text{ }\eta _{1}\left( \tilde{x}_{1}(x),\tilde{x}%
_{2}(x),\tilde{x}_{3}(x)\right) \leq 0\right\} \cap \left\{ \tilde{x}%
_{1}(x)\geq 0\text{, }\tilde{x}_{3}(x)\geq 0\right\} $,\newline
$A_{2}=\left\{ x\in U:\text{ }\eta _{2}\left( \tilde{x}_{1}(x),\tilde{x}%
_{2}(x),\tilde{x}_{3}(x)\right) \leq 0\right\} \cap \left\{ \tilde{x}%
_{1}(x)\geq 0\text{, }\tilde{x}_{3}(x)\leq 0\right\} $,\smallskip \newline
with \smallskip \newline
$\eta _{1}\left( \tilde{x}_{1},\tilde{x}_{2},\tilde{x}_{3}\right) =\tilde{x}%
_{3}+\frac{1}{2k}(\tilde{x}_{1}+\tilde{x}_{2})\left( \tilde{x}_{2}^{k}-\frac{%
1}{2^{k}}(\tilde{x}_{1}+\tilde{x}_{2})^{k}\right) +O(r^{k+2})$, \newline
$\eta _{2}\left( \tilde{x}_{1},\tilde{x}_{2},\tilde{x}_{3}\right) =-\tilde{x}%
_{3}-\frac{1}{2k}(\tilde{x}_{1}-\tilde{x}_{2})\left( \tilde{x}_{2}^{k}+\frac{%
1}{2^{k}}(\tilde{x}_{1}-\tilde{x}_{2})^{k}\right) +O(r^{k+2})$; \smallskip 
\newline
here $r=(\tilde{x}_{1}^{2}+\tilde{x}_{2}^{2}+\tilde{x}_{3}^{2})^{1/2}$. 
\newline
Since $\eta _{1}(\tilde{x}_{1},0,0)<0$ and $\eta _{2}(\tilde{x}_{1},0,0)<0$
(we choose $U$ to be sufficiently small), it is seen that $\Gamma
_{x_{0}}\cap U\subset int$ $\mathcal{A}^{+}(x_{0},U)$, and $\Gamma $ is a
regular closed orbit. At the same time one easily sees that $[\tilde{X},%
\tilde{Y}]=-kx_{2}^{k-1}\frac{\partial }{\partial x_{3}}$ which yields $%
ad^{l}\tilde{X}.\tilde{Y}=0$ for all $l\geq 2$, meaning that (\ref{Arwar})
does not hold at any point of $\Gamma $.

\begin{acknowledgement}
This work was partially supported by the Polish Ministry of Research and
Higher Education, grant NN201 607540.
\end{acknowledgement}

\textsl{Faculty of Mathematics and Natural Sciences, Cardinal Stefan Wyszy%
\'{n}ski University, ul. Dewajtis 5, 01-815 Waszawa, Poland.}

\textsl{Institute of Mathematics, Polish Academy of Sciences, ul. \'{S}%
niadeckich 8, 00-950 Warszawa, Poland.}

\end{document}